\newtheorem{thm}{Theorem}[section]
\newtheorem{prop}[thm]{Proposition}
\newtheorem{dfn}[thm]{Definition}
\theoremstyle{remark}
\newtheorem{rmkk}[thm]{Remark}
\newtheorem{exe}[thm]{Example}
\newenvironment{rmk}{\begin{rmkk}\rm}{\qee\end{rmkk}}
\newcommand{\qee}{\mbox{\hspace{0.2mm}}\hfill$\triangle$}
\newcommand{\C}{\mathbb{C}}
\newcommand{\Q}{\mathbb{Q}}
\newcommand{\Pj}{\mathbb{P}}
\newcommand{\N}{\mathbb{N}}
\newcommand{\cO}{\mathcal{O}}
\newcommand{\codim}{\operatorname{codim}}
\newcommand{\Pic}{\operatorname{Pic}}
\title{\bf \large     CODIMENSION BOUNDS     FOR THE  NOETHER-LEFSCHETZ \\[5pt]   COMPONENTS
  FOR TORIC  VARIETIES}
\author[$^\S$]{Ugo Bruzzo}
\author[$\ddag$]{William D. Montoya}
\affil[$\S \ddag$]{\,SISSA (Scuola Internazionale Superiore di Studi Avanzati),  \par\vskip-3pt Via Bonomea 265, 34136 Trieste, Italy  }
\affil[$^\S$]{Departamento de Matem\'atica, Universidad Federal da Para\'iba, \par\vskip-3pt Campus I, Jo\~ao Pessoa, PB, Brazil}
\affil[$^\S$]{INFN (Istituto Nazionale di Fisica Nucleare), Sezione di Trieste}
\affil[$^\S$]{IGAP (Institute for Geometry and Physics), Trieste}
\affil[$^\S$]{Arnold-Regge Center for Algebra, Geometry \par\vskip-3pt and Theoretical Physics, Torino}
\affil[$\ddag$]{ICTP (International Centre for Theoretical Physics), \par\vskip-3pt Strada Costiera, 11, 34151 Trieste, Italy}
\date{}
\begin{document}
\maketitle

\begin{abstract} 
 For a quasi-smooth hyper-surface $X=\{f=0\}$ in a projective simplicial toric variety $\Pj_{\Sigma}$, the morphism  $i^*:H^p(\Pj_{\Sigma})\to  H^p(X)$ induced by the inclusion,  is injective for $p=d$ and an isomorphism for $p<d-1$, where $d=\dim \Pj_{\Sigma}$. This allows one to define the Noether-Lefschetz locus
 $NL_{\beta}$ as the locus   of quasi-smooth hypersurfaces of degree $\beta$ such that $i^*$ acting on
 the middle algebraic cohomology is not an isomorphism.  In this paper we prove that, under some assumptions,  if $\dim\Pj_{\Sigma}=2k+1$
 and 
 $k\beta-\beta_0=n\eta$ $(n\in\N)$,  where  $\eta$ is the class of an ample divisor, then every irreducible component $V$ of the Noether-Lefschetz locus  quasi-smooth hypersurfaces of degree $\beta$ satifies the bounds 
$$n+1\leq \codim V \leq h^{k-1,k+1}(X).$$
\end{abstract}

 \let\svthefootnote\thefootnote
\let\thefootnote\relax\footnote{
\hskip-2\parindent 
Date:  \today  \\
{\em 2010 Mathematics Subject Classification:}  14C22, 14J70, 14M25 \\ 
{\em Keywords:} Noether-Lefschetz components, codimension, toric varieties \\
Email: {\tt  bruzzo@sissa.it, wmontoya@sissa.it}
}
\addtocounter{footnote}{-1}\let\thefootnote\svthefootnote

\newpage

\section{Introduction}

The classical Noether-Lefschetz theory is about the Picard number of surfaces in 3-dimensional projective space. Let
$\mathcal U_d \subset \Pj H^0(\Pj^3,\cO_{\Pj^3}(d))$ be the locus of smooth surfaces of degree $d$ in $\Pj^3$, with $d\ge 4$; then
the very general surface in $\mathcal U_d$ has Picard number 1 (for an historical perspective of the Noether-Lefschetz problem, and exhaustive references the reader may consult \cite{bn}) .  Moreover, if
$Z$ is a component of the locus in $\mathcal U_d$ whose points correspond to surfaces with Picard number greater than 1 (the Noether-Lefschetz locus), then
$$ d - 3 \le \codim_{\mathcal U_d} Z \le {d-1 \choose 3} .$$
This result was generalized in \cite{bg1,LanMar} to quasi-smooth surfaces\footnote{A neat way to define the notion of quasi-smooth hypersurface $X$  in a toric variety $\Pj_\Sigma$ is to regard $\Pj_\Sigma$ as an orbifold: then $X$ is quasi-smooth if and only if it is a sub-orbifold of
$\Pj_\Sigma$. Heuristically, $X$ is quasi-smooth if its only singularities are those ``inherited'' from $\Pj_\Sigma$.}
 in projective simplicial toric threefolds satisfying some conditions.
The purpose of the present paper is to extend these bounds to the case of projective simplicial toric  varieties of higher odd dimension, see Theorems \ref{lowercodim} and \ref{uppercodim} (when the ambient variety has even dimension
the problem is trivial as the middle cohomology of hypersurfaces is controlled by the Lefschetz hyperplane theorem).

This short  paper is a natural sequel to  \cite{bm}, where the definition of the Noether-Lefschetz locus was extended
to simplicial projective toric varieties $ \Pj^{2k+1}_{\Sigma}$ of arbitrary odd dimension.
Given an ample class $\beta$  in $\Pic(\Pj^{2k+1}_{\Sigma})$, one considers
sections $f\in \Pj(H^0(\cO_{\Pj^{2k+1}_{\Sigma}}(\beta))) $  
such that $X_f=\{f=0\}$ is a quasi-smooth hypersurface.
Let $\mathcal{U}_{\beta}\subset \Pj(H^0(\cO_{\Pj^{2k+1}_{\Sigma}}(\beta))) $ be the open subset parameterizing   quasi-smooth hypersurfaces and let $\pi: \chi_{\beta}\to  \mathcal{U}_{\beta}$ be the tautological family. One considers
the local system $\mathcal{H}^{2k} = R^{2k}\pi_{\star}\C \otimes \cO_{\mathcal{U}_{\beta}}$ over $\mathcal{U}_{\beta}$.
The associated flat connection (the Gauss-Manin connection) will be denoted by $\nabla$. 

Let $0\neq \lambda_f\in H^{k,k}(X_f,\Q)/i^*(H^{k,k}(\Pj^{2k+1}_{\Sigma}))$ and let $U$ be a contractible open subset around $f$.  Finally, let $\lambda\in \mathcal{H}^{2k}(U)$ be the section defined by $\lambda_f$ and let $\bar{\lambda}$ its image in $(\mathcal{H}^{2k}/F^k\mathcal{H}^{2k})(U)$, where $ F^k\mathcal{H}^{2k} =\mathcal{H}^{2k,0}\oplus \mathcal{H}^{2k-1,1}\oplus \dots \oplus \mathcal{H}^{k,k}$.

\begin{dfn}[Local Noether-Lefschetz Locus] $N^{k,\beta}_{\lambda,U} = \{G\in U \mid \bar{\lambda}_{G}=0\}$. \label{defNL}\end{dfn}

In this paper we continue the study of the Noether-Lefschetz locus and establish 
 lower and   upper bounds for the codimension of its components. In section 2 we obtain the lower bound,    which,  following the terminology in \cite{bn}, we call the ``explicit Noether-Lefschetz theorem for toric varieties.''  In section 3, using the  Hodge theory  for hypersurfaces in complete simplicial toric varieties,  and the orbifold structure of the quasi-smooth hyper-surfaces (see \cite{bc}), we establish the upper bound, extending the ideas in \cite{bg1}.
 
 \smallskip
 {\bf Acknowledgements.} We thank Paolo Aluffi for useful discussions. The results presented here stemmed
 from a research project initiated by the first author with Antonella Grassi. 
 This research was partly supported by PRIN ``Geometria delle variet\`a algebriche" and by
GNSAGA-INdAM. 

\section{Explicit Noether-Lefschetz theorem in toric varieties}
This section is a natural extension to higher dimensions of 
 the ideas developed in \cite{bg1,LanMar} for the case of threefolds. To this end there are   two points  to consider:

1.  Let $S = \oplus_\beta S_\beta$ be the Cox ring of the toric variety   $\Pj_{\Sigma}^3$ under consideration.
	In \cite{bg1,LanMar} the following assumption was made. Let $\beta$ and $\eta$ be ample classes in
	$\Pic(\Pj_{\Sigma}^3)$, with $\eta$ primitive and 0-regular  (in the sense of Castelnuovo regularity), and
	$\beta-\beta_0=n\eta$ for some $n\ge 0$, where $\beta_0$ is the anticanonical class   of $\Pj_{\Sigma}^{3}$. Then  one assumes  that  
 the multiplication map $S_{\beta}\otimes S_{n\eta}\to  S_{\beta+n\eta}$ is surjective;  this implies that a very general quasi-smooth surface of degree $\beta$ in $\Pj_{\Sigma}^3$  has the same Picard number as $\Pj_{\Sigma}^3$.
 In the higher dimensional case, if  we assume again the surjectivity of the multiplication map, using   Theorem 10.13 and  Proposition 13.7 in \cite{bc},
    and Lemma 3.7 in \cite{bg}, one proves that the  primitive cohomology of degree $2k$ 
    of   a very general quasi-smooth hypersurface of degree $\beta$ is zero. Of course we recover the result of \cite{Green1}   when $k=1$.

2. In \cite{bg,LanMar}  it alsowas  assumed that $H^1(\cO_{\Pj_{\Sigma}^3}(\beta-\eta))=H^{2}(\cO_{\Pj_{\Sigma}^3}(\beta-2\eta))=0$, which allowed one to conclude that a certain vector bundle is $1$-regular  with respect to $\eta$. We assume    
	$$H^q(\cO_{\Pj_{\Sigma}^{2k+1}}(\beta-q\eta))=0 \qquad \mbox{for} \quad 1 \le q \le 2k $$ and   will prove the same regularity for that  vector bundle.

The next Theorem establishes the lower bound for the codimension of the components of the Noether-Lefschetz locus.

\begin{thm}
Let $\Pj_{\Sigma}^{2k+1}$ be a Gorenstein projective simplicial toric variety, $\eta$ a 0-regular primitive ample Cartier class,   and $\beta$ a Cartier class such that $k\beta-\beta_0=n\eta$ $(n>0)$,  where $\beta_0$ is the anticanonical class   of $\Pj_{\Sigma}^{2k+1}$.  Assume that the multiplication morphism $S_{\beta}\otimes S_{n\eta}\to  S_{\beta+n\eta}$ is surjective, and that $H^q(\cO_{\Pj_{\Sigma}^{2k+1}}(\beta-q\eta))=0$ for $q=1,\dots,2k$; then 
$$ \codim Z \ge n+1 $$
for every irreducible component $Z$ of the Noether-Lefschetz locus $ NL_{\lambda, U}^{k,\beta}$.
\label{lowercodim}
\end{thm}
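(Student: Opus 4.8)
The plan is to follow the classical Griffiths--Harris strategy for the explicit Noether--Lefschetz theorem, as adapted to the toric setting in \cite{bg1,LanMar}, and to push it through in higher odd dimension using the Hodge-theoretic description of the middle cohomology of quasi-smooth hypersurfaces from \cite{bc}. First I would recall that, by the infinitesimal theory of the period map, the Zariski tangent space to a component $Z$ of $NL^{k,\beta}_{\lambda,U}$ at a point $f$ is the kernel of the map
$$
H^0(X_f,N_{X_f/\Pj_\Sigma}) \longrightarrow \operatorname{Hom}\bigl(H^{k,k}_{\mathrm{prim}}(X_f), H^{k-1,k+1}(X_f)\bigr)
$$
obtained by contracting the Kodaira--Spencer class with $\bar\lambda$ (the cup product composed with the Gauss--Manin connection). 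Hence $\codim Z \ge \operatorname{rank}$ of this map evaluated at $\lambda_f$, and it suffices to bound below the dimension of the image of the composite
$$
H^0(X_f,N_{X_f/\Pj_\Sigma}) \xrightarrow{\ \cup\,\lambda_f\ } H^{k-1,k+1}(X_f).
$$

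Next I would translate everything into Cox-ring/Jacobian-ring language. Using Theorem 10.13 and Proposition 13.7 of \cite{bc}, the Hodge pieces $H^{k+1,k-1}_{\mathrm{prim}}(X_f)$ and $H^{k,k}_{\mathrm{prim}}(X_f)$ are identified with graded pieces $R(f)_{(k+1)\beta-\beta_0}$ and $R(f)_{k\beta-\beta_0}$ of the Jacobian ring $R(f)=S/J(f)$, and the normal-bundle sections correspond (via surjectivity of $S_\beta \otimes S_{n\eta} \to S_{\beta+n\eta}$ and the vanishing hypotheses, invoking Lemma 3.7 of \cite{bg}) to the degree-$\beta$ piece $R(f)_\beta$ — or more precisely to a quotient thereof that still surjects onto what we need. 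Under these identifications the cup-product map becomes multiplication in the Jacobian ring,
$$
R(f)_\beta \otimes R(f)_{k\beta-\beta_0} \longrightarrow R(f)_{(k+1)\beta-\beta_0},
$$
and the element $\lambda_f$ sits in $R(f)_{k\beta-\beta_0}$. Since $k\beta-\beta_0 = n\eta$, the target degree is $\beta + n\eta$, and the relevant statement reduces to: for $0 \neq P \in R(f)_{n\eta}$, the multiplication map $R(f)_\beta \to R(f)_{\beta+n\eta}$, $Q \mapsto PQ$, has image of dimension at least $n+1$.

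The core estimate is therefore a statement about Hilbert functions of the Artinian Gorenstein Jacobian ring $R(f)$ together with a Macaulay-type/"lifting" argument showing that multiplication by a nonzero element of degree $n\eta$ cannot drop the dimension too much. Concretely I would argue that if the image had dimension $\le n$, then intersecting with a general pencil would force $P$ to vanish on $X_f$ to high order along a curve, contradicting either quasi-smoothness or the $0$-regularity and primitivity of $\eta$; this is exactly where the hypothesis that $\eta$ is $0$-regular and primitive, and that $\beta-q\eta$ has vanishing intermediate cohomology, is used, via the $1$-regularity of the auxiliary vector bundle announced in point 2 of the section. Finally I would note that in the base case $k=1$ this recovers \cite{Green1,bg1,LanMar}, which serves as a consistency check.

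I expect the main obstacle to be the last step: establishing the $1$-regularity (with respect to $\eta$) of the relevant vector bundle — the bundle whose sections encode the first-order variation of $\bar\lambda$ — from the cohomology vanishing $H^q(\cO_{\Pj_\Sigma^{2k+1}}(\beta-q\eta))=0$ for $1\le q\le 2k$. In the threefold case only two vanishings were needed; in dimension $2k+1$ one must run a longer Koszul/spectral-sequence bookkeeping to propagate regularity through all the intermediate cohomology groups, and checking that the Castelnuovo--Mumford regularity estimates interact correctly with the grading shift $k\beta-\beta_0=n\eta$ (rather than $\beta-\beta_0=n\eta$) is the delicate point. Once that regularity is in hand, the lower bound $\codim Z \ge n+1$ follows from the Hilbert-function estimate above by a now-standard argument.
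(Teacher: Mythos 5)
Your overall strategy is the right one and coincides with the paper's: reduce, via the infinitesimal period map and the Batyrev--Cox description of the primitive cohomology, to a rank estimate for a multiplication map in the Jacobian ring, the new higher-dimensional input being the $1$-regularity with respect to $\eta$ of the kernel bundle $M_0=\ker\bigl(H^0(\cO_{\Pj_{\Sigma}^{2k+1}}(\beta))\otimes\cO_{\Pj_{\Sigma}^{2k+1}}\to\cO_{\Pj_{\Sigma}^{2k+1}}(\beta)\bigr)$. But the step you defer as ``the main obstacle'' is in fact the \emph{entire} content of the paper's proof (everything else is quoted from \cite{bg1,LanMar}), and it is much easier than you anticipate: no long spectral-sequence bookkeeping is required. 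Twisting $0\to M_0\to W_0\otimes\cO\to\cO(\beta)\to 0$ by $\cO((1-q)\eta)$ and taking cohomology gives the three-term piece
$$H^{q-1}(\cO_{\Pj_{\Sigma}^{2k+1}}(\beta-(q-1)\eta))\to H^q(M_0((1-q)\eta))\to H^q(W_0\otimes\cO_{\Pj_{\Sigma}^{2k+1}}((1-q)\eta)),$$
in which the left term vanishes for $2\le q\le 2k+1$ precisely by the hypothesis $H^q(\cO(\beta-q\eta))=0$, $1\le q\le 2k$ (this is why exactly those $2k$ vanishings are assumed), the right term vanishes by the $0$-regularity of $\eta$, the case $q=1$ follows from $W_0=H^0(\cO(\beta))$, and $q>2k+1$ is empty for dimension reasons. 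A proposal that stops short of this computation omits the only argument that is new in higher dimension.

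There are also two substantive inaccuracies in the reduction itself. First, the Batyrev--Cox identification reads $H^{2k-q,q}_{\mathrm{prim}}(X_f)\simeq R(f)_{(q+1)\beta-\beta_0}$, so $H^{k,k}_{\mathrm{prim}}\simeq R(f)_{(k+1)\beta-\beta_0}$ and $H^{k-1,k+1}_{\mathrm{prim}}\simeq R(f)_{(k+2)\beta-\beta_0}$: your $\lambda_f$ does \emph{not} sit in $R(f)_{k\beta-\beta_0}=R(f)_{n\eta}$, and the map whose rank bounds $\codim Z$ is $\mu_{\lambda}\colon R(f)_\beta\to R(f)_{(k+2)\beta-\beta_0}$. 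The degree $n\eta$ only enters after applying Macaulay--Gorenstein duality in the Jacobian ring to transport this rank computation to low degree; that duality step is absent from your sketch, so the clean-looking statement ``multiplication by $P\in R(f)_{n\eta}$ has image of dimension $\ge n+1$'' is not yet justified as the correct reduction. Second, the core estimate is not proved by a pencil argument forcing $P$ to vanish to high order and contradicting quasi-smoothness; it is Green's Koszul-cohomology vanishing theorem, run on the flag $W_c\subset\cdots\subset W_0$ of base-point-free subsystems, and the $1$-regularity of $M_0$ is exactly the input that makes that vanishing work. In short, the two places where actual work is needed are the two places where your proposal is either empty or incorrect, even though the surrounding architecture is the same as the paper's.
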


\begin{proof} The proof is a higher dimensional generalization of that in \cite{bg1} (which in turn largely
mimics the proof of \cite{Green2,Green1} for the case of $\mathbb P^3$), with the modification proposed in \cite{LanMar}.
We take a base point free linear system $W$ in $H^0(\cO_{\Pj_{\Sigma}^{2k+1}}(\beta))$ and a complete flag of linear subspaces 
$$W=W_c\subset W_{c-1} \subset \cdots \subset W_1\subset W_0=H^0(\cO_{\Pj_{\Sigma}^{2k+1}}(\beta)) .$$
Let $M_i$ be  the kernel of the surjective map $W_i\otimes \cO_{\Pj_{\Sigma}^{2k+1}}\to  \cO_{\Pj_{\Sigma}^{2k+1}}(\beta)$,
which is locally free. 
We have to prove that $M_0$ is 1-regular  with respect to $\eta$, i.e., that $H^q(M_0((1-q)\eta))=0$ for every positive $q$
(this is the regularity property we hinted at in the introduction). Taking cohomology from
\begin{equation} \label{M0}  0 \to M_0 \to W_0 \otimes  \cO_{\Pj_{\Sigma}^{2k+1}}\to  \cO_{\Pj_{\Sigma}^{2k+1}}(\beta) \to 0\end{equation}
 we get 
$$0\to  H^0(M_0)\to  H^0(W_0\otimes \cO_{\Pj_{\Sigma}^{2k+1}})\xrightarrow{\pi} H^0(\cO_{\Pj_{\Sigma}^{2k+1}}(\beta))  \to  H^1(M_0)\to  0\,; $$
as $\pi$ is surjective, $H^1(M_0)=0$. The vanishing of $H^q(M_0(1-q)\eta)=0$ for $1<q\leq 2k+1$ is obtained by induction,  tensoring the short exact sequence \eqref{M0} by $\cO_{\Pj_{\Sigma}^{2k+1}}((1-q)\eta)$, and considering the segment of the  long exact sequence of cohomology
\begin{multline} \cdots \to  H^{q-1}(\cO_{\Pj_{\Sigma}^{2k+1}}(\beta-(q-1)\eta))\to  H^q(M_0(-(q-1)\eta))  \\ \to  H^q(W_0\otimes \cO_{\Pj_{\Sigma}^{2k+1}}(-(q-1)\eta))\to  \cdots \end{multline}
where $H^{q-1}(\cO_{\Pj_{\Sigma}^{2k+1}}(\beta-(q-1)\eta))=0$ by  the inductive assumption, while      $$H^q(W_0\otimes \cO_{\Pj_{\Sigma}^{2k+1}}(-(q-1)\eta))=0$$ as $\eta$ is 0-regular.

The rest of the proof follows as in \cite{bg1,LanMar}.
\end{proof}
%
%

\section{Upper bound for the Codimension of the Noether-Lefschetz Components in Toric Varieties}

The Explicit Noether-Lefschetz Theorem has provided a  lower bound for the codimension of the Noether-Lefschetz components. Hodge theory in toric varieties will give us the upper bound.  
For a class $\beta$ as in the previous Section, let $f$ be a point in the Noether-Lefschetz locus,
let $X_f$ be the corresponding hypersurface in $\Pj_\Sigma^{2k+1}$, and let $\lambda$ be a class as in Definition \ref{defNL}.

\begin{thm} $\codim   Z \leq h^{k-1,k+1}(X_f)$ for every irreducible component $Z$ of the Noether-Lefschetz locus $ NL_{\lambda, U}^{k,\beta}$.
\label{uppercodim}
\end{thm}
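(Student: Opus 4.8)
The plan is to realize the Noether-Lefschetz locus locally as the zero scheme of the period map and bound its codimension by the rank of the obstruction space, which Hodge theory identifies with $H^{k-1,k+1}(X_f)$. First I would recall the infinitesimal description: for a quasi-smooth hypersurface $X_f$ in the simplicial toric variety $\Pj_\Sigma^{2k+1}$, the Macaulay-type / Griffiths-type isomorphisms of \cite{bc} (Theorem 10.13 and the surrounding results) express the primitive Hodge pieces $H^{2k-p,p}_{\mathrm{prim}}(X_f)$ as graded pieces $R(f)_{(p+1)\beta-\beta_0}$ of the Jacobian ring $R(f)=S/J(f)$, and the cup product with the Kodaira-Spencer class $\kappa\in H^1(X_f,T_{X_f})\cong S_\beta/J(f)_\beta$ (or rather the relevant summand of first-order deformations coming from the ambient Cox ring) is realized as multiplication $R(f)_{k\beta-\beta_0}\otimes S_\beta \to R(f)_{(k+1)\beta-\beta_0}$. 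Here the hypothesis $k\beta-\beta_0=n\eta$ guarantees that the class $\lambda$, represented in $R(f)_{k\beta-\beta_0}$, and its derivative land in the expected graded pieces.

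The central step is the standard first-order argument: the tangent space to $N^{k,\beta}_{\lambda,U}$ at $f$ is the kernel of the map
$$
\rho_\lambda: H^0(\cO_{\Pj_\Sigma^{2k+1}}(\beta)) \longrightarrow H^{k-1,k+1}(X_f), \qquad g \longmapsto \overline{g\cdot \lambda},
$$
obtained by differentiating the condition $\bar\lambda_G=0$ along the direction $g$ (this is where the Gauss-Manin connection $\nabla$ and Griffiths transversality enter: $\nabla_g$ shifts a class in $F^k\mathcal H^{2k}/F^{k+1}$ into $F^{k-1}/F^k$, whose primitive part is $H^{k-1,k+1}$). Since any component $Z$ through $f$ has $T_f Z \subseteq \ker \rho_\lambda$, we get $\codim Z \ge \mathrm{rank}\,\rho_\lambda$ at a general point — but that is the lower bound; for the upper bound I instead argue that $Z$ is cut out, locally and scheme-theoretically in a neighbourhood of a general point, by the equations $\bar\lambda_G=0$, which take values in a space of dimension $h^{k-1,k+1}(X_f)$ (the target of the period differential, using that the $(k-1,k+1)$ part is where the relevant Hodge component of $\mathcal H^{2k}/F^k$ lives after one application of $\nabla$). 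A subvariety defined by that many equations has codimension at most $h^{k-1,k+1}(X_f)$; making this precise requires noting that the higher-order Taylor coefficients of the period map also take values in successive graded pieces $H^{2k-j,j}$ with $j \ge k+1$, but it suffices to use the first-order piece together with the fact that $N^{k,\beta}_{\lambda,U}$ is analytically defined by the vanishing of a single holomorphic section of the Hodge bundle $\mathcal H^{2k}/F^k$ restricted to the span of $\lambda$, whose fibre dimension in the relevant summand is $h^{k-1,k+1}(X_f)$.

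Concretely, the steps in order: (1) invoke \cite{bc} to get the Jacobian-ring description of $H^{\bullet,\bullet}_{\mathrm{prim}}(X_f)$ and of cup product with first-order deformations; (2) write the local defining equations of $N^{k,\beta}_{\lambda,U}$ near a general $f\in Z$ as the vanishing of the components of $\bar\lambda_G$ in a trivialization of $\mathcal H^{2k}/F^k\mathcal H^{2k}$, and observe that only the $\mathcal H^{k-1,k+1}$-summand is obstructed to first order by Griffiths transversality while the $\lambda$ we started from is of pure type $(k,k)$; (3) conclude that $Z$ is locally contained in the zero locus of a map to a vector space of dimension $h^{k-1,k+1}(X_f)$, hence $\codim Z \le h^{k-1,k+1}(X_f)$. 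The main obstacle I anticipate is step (2): one must be careful that the relevant "obstruction space" really is $H^{k-1,k+1}(X_f)$ and not something larger (e.g. the whole of $H^{2k}(X_f)/F^k$), which uses crucially that $\lambda$ is a Hodge class of type $(k,k)$ so that its first derivative under $\nabla$ lies in $H^{k-1,k+1}$ alone, together with the toric Macaulay duality of \cite{bc} to ensure this graded piece of the Jacobian ring is exactly $h^{k-1,k+1}(X_f)$-dimensional; a secondary technical point is checking that the linear system $W$ used in the previous section can be taken base-point-free and large enough that the period map is defined and the reduction to the ambient Cox-ring deformations loses no generality, which is where the surjectivity hypothesis on $S_\beta\otimes S_{n\eta}\to S_{\beta+n\eta}$ and the Gorenstein assumption are used.
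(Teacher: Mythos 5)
Your overall strategy coincides with the paper's: realize $NL^{k,\beta}_{\lambda,U}$ locally as the zero locus of the section $\bar\lambda$ of $\mathcal{H}^{2k}/F^k\mathcal{H}^{2k}$ and use Griffiths transversality to reduce the number of local defining equations to $h^{k-1,k+1}$, i.e.\ Voisin's classical lemma (Lemma 3.1 of \cite{v}, \S 5.3 of \cite{v2}). But there are two genuine gaps. First, you take the variational machinery off the shelf, whereas in this setting it is precisely what has to be proved: the fibres are quasi-smooth hypersurfaces in a simplicial toric variety, hence orbifolds, and the tautological family is a priori only generically locally trivial. The paper's Section 3 is devoted to exactly this: local triviality over a Zariski open subset via Verdier's theorem \cite{ve}, the orbifold Hodge decomposition \cite{Saito,Zaf09,l}, the Cartan--Lie formula and the holomorphicity of the period map for orbifold families \cite{LiuZhuang}, and only then Griffiths transversality. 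Without these inputs, invoking ``Griffiths transversality'' in the toric case is circular.

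Second, your reduction to $h^{k-1,k+1}$ equations is not justified as written. The section $\bar\lambda$ takes values in $\mathcal{H}^{2k}/F^k\mathcal{H}^{2k}$, whose rank is $\sum_{p<k}h^{p,2k-p}$, not $h^{k-1,k+1}$; and observing that the differential of the extra components vanishes along $Z$ ``to first order'' does not make those equations redundant --- a first-order statement controls the tangent space, not the number of local defining equations. The actual argument, which is why the paper defers to \cite{v,v2} after establishing transversality, is this: let $V$ be the zero locus of the $F^{k-1}\mathcal{H}^{2k}/F^k\mathcal{H}^{2k}$-component of $\bar\lambda$ (that is $h^{k-1,k+1}$ holomorphic equations); flatness of $\lambda$ together with $\nabla F^{p}\mathcal{H}^{2k}\subset F^{p-1}\mathcal{H}^{2k}\otimes\Omega_{\mathcal U}$ forces the image of $\lambda$ in $\mathcal{H}^{2k}/F^{k-1}\mathcal{H}^{2k}$ to satisfy, along $V$, a linear first-order system with zero initial condition at $f$, hence to vanish identically on $V$, so that $V$ coincides with $NL^{k,\beta}_{\lambda,U}$ near $f$. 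Finally, the Jacobian-ring material from \cite{bc} in your step (1) is not needed for the upper bound (it pertains to the infinitesimal computations and to the lower bound of Theorem \ref{lowercodim}), and no Macaulay duality is needed to identify the count: since $H^{2k}(\Pj_\Sigma^{2k+1})$ is of type $(k,k)$, the group $H^{k-1,k+1}(X_f)$ is automatically primitive.
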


This section is devoted to proving this theorem. Classically it is a consequence of  Griffiths' Transversality, which we want to extend  to the context of projective simplicial toric varieties.

%
%
%
%
%

\paragraph{Variations of Hodge Structure.}
The tautological family $\pi: \mathcal{X}_{\beta} \subset \mathcal{U}_{\beta}\times \Pj_{\Sigma}\to  \mathcal{U}_{\beta}$ is of finite type and separated since $\mathcal{X}_{\beta}$ and $\mathcal{U}_{\beta}$ are varieties. By Corollary 5.1 in \cite{ve} there exists a Zariski open set $\mathcal U \subset \mathcal{U}_{\beta}$ such that $\mathcal{X}=\pi^{-1}(\mathcal{U})\to  \mathcal{U}$ is a locally trivial fibration in the classical topology,  i.e., there exists an open cover of $\mathcal{U}$ by contractible open sets such that for every element $U$ of the cover and  every point $X_0\in U$ we have   $\mathcal{X}_{|U}\simeq \pi^{-1}(U)\simeq U\times X_0$, which implies that 
$X_u\simeq X_0$  for all $u\in U$  as  $C^{\infty}$ orbifolds; moreover, $H^k(X_u)\simeq H^k(X_0)$.   Thanks to the locally trivialization and  as quasi-smooth hypersurfaces  are   orbifolds \cite{bc}, we can put an orbifold structure on $\mathcal{X}=\pi^{-1}(U)$.

\paragraph{The Cartan-Lie formula.}
 For every $k$, let $\mathcal H^k$ be the complex vector bundle on $\mathcal U_\beta$ associated to the local system
 $R^k\pi_\ast\C$. 
Let $\Omega$  be a Zariski $k$-form on the orbifold $\mathcal{X}$ such that 
$\Omega_u = \Omega_{\vert X_u}$  is closed for every $u\in U$; we can associate with it a local section $\omega$  of the vector bundle $\mathcal{H}^{k}$
by letting $$\omega(u) = [\Omega_u] \in H^k(X_u,\C).$$

The following result computes the Gauss-Manin connection $\nabla: \mathcal{H}^k\to  \mathcal{H}^k\otimes \Omega_{\mathcal{U}}$ in the direction $w$ restricted to $X_0$.

\begin{prop}[Cartan-Lie Formula] If $w\in T_{\mathcal{U},X_0}$ and $v\in \Gamma (T_{\mathcal{X}|X_0})$ is such that $\phi_{*,x}(v)=w$ for all $ x\in X_0$, one has 
\begin{equation}
\nabla_w  (\omega) =\left[ \iota_v (d\Omega)_{|X_0} \right]     
\end{equation}
\end{prop}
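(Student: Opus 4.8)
The plan is to reduce the statement to the classical Cartan–Lie (or "Cartan's magic") formula for the Lie derivative, exploiting the orbifold structure just installed on $\mathcal{X}=\pi^{-1}(U)$. First I would fix the data: $w\in T_{\mathcal{U},X_0}$, a global lift $v\in\Gamma(T_{\mathcal{X}|X_0})$ with $\phi_{*,x}(v)=w$ for all $x\in X_0$ (here $\phi:\mathcal{X}\to\mathcal{U}$ is the structure map of the family), and the closed Zariski $k$-form $\Omega$ on the orbifold $\mathcal{X}$ whose fibrewise restrictions $\Omega_u=\Omega_{|X_u}$ define the section $\omega$ of $\mathcal{H}^k$ by $\omega(u)=[\Omega_u]\in H^k(X_u,\C)$. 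Because the construction is local on $U$ and compatible with the orbifold charts, it suffices to verify the formula on a uniformizing chart, where $\mathcal{X}$ is (the quotient by a finite group of) a smooth manifold and all the usual calculus of differential forms applies $G$-equivariantly; the finite-group quotients do not affect de Rham cohomology with $\C$-coefficients, so passing to the chart loses nothing.

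Next I would extend $v$ to a $C^\infty$ vector field $\tilde v$ on a neighbourhood of $X_0$ in $\mathcal{X}$ that is still $\phi$-related to (an extension of) $w$, and let $\{\psi_t\}$ be its local flow. Since $w\in T_{\mathcal{U},X_0}$, the flow $\psi_t$ covers the flow of $w$ on $\mathcal{U}$, hence carries $X_0$ to the nearby fibres $X_{u(t)}$; trivialising $\mathcal{H}^k$ over the contractible $U$ via the local system $R^k\pi_\ast\C$, the covariant derivative $\nabla_w\omega$ is represented by
\begin{equation}
\nabla_w(\omega)=\frac{d}{dt}\Big|_{t=0}\big[\psi_t^\ast\Omega_{|X_0}\big]=\Big[\,\frac{d}{dt}\Big|_{t=0}\psi_t^\ast\Omega\,\Big]_{|X_0}=\big[\mathcal{L}_{\tilde v}\Omega\big]_{|X_0}.
\end{equation}
This is the one genuinely substantive point: that in the flat trivialisation the Gauss–Manin derivative is exactly the $t$-derivative of the pulled-back cohomology class, which is the standard description of $\nabla$ as the connection whose flat sections are the locally constant (topological) ones, transported through the orbifold charts. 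Then Cartan's formula $\mathcal{L}_{\tilde v}\Omega=\iota_{\tilde v}(d\Omega)+d(\iota_{\tilde v}\Omega)$ gives $[\mathcal{L}_{\tilde v}\Omega]_{|X_0}=[\iota_{\tilde v}(d\Omega)]_{|X_0}+[d(\iota_{\tilde v}\Omega)_{|X_0}]$, and the second term is exact on $X_0$, hence vanishes in $H^k(X_0,\C)$. Finally, since $\iota_{\tilde v}(d\Omega)$ restricted to $X_0$ depends only on the values of $\tilde v$ along $X_0$, i.e. only on $v$, we may replace $\tilde v$ by $v$ and obtain
\begin{equation}
\nabla_w(\omega)=\big[\iota_v(d\Omega)_{|X_0}\big],
\end{equation}
which is the claim.

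The main obstacle I anticipate is not the differential-geometric identity but the bookkeeping needed to make sense of it on an orbifold: one must check that "Zariski $k$-form on $\mathcal{X}$", the contraction $\iota_v$, the exterior derivative $d\Omega$, and the restriction to a fibre all behave well under the finite-group quotients defining the orbifold charts, and that the local trivialisation of $\mathcal{X}$ over $U$ from \cite{ve} is compatible with these charts so that the flow argument can be run chart-by-chart and glued. Once one grants (as the paper does) that quasi-smooth hypersurfaces and the total space $\mathcal{X}$ carry compatible orbifold structures and that orbifold de Rham cohomology computes $H^\bullet(-,\C)$, each of these checks is routine, and the proof is essentially the classical one of Griffiths transposed verbatim to uniformizing charts.
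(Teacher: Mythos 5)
Your argument is correct and is essentially the one the paper points to: the paper's proof is a citation to Liu--Zhuang and to Proposition 9.2.2 of Voisin's book, stating that ``the proof goes as in the classical case,'' and your write-up is precisely that classical flow-plus-Cartan-formula argument transposed to uniformizing orbifold charts. You correctly isolate the two real points --- identifying the Gauss--Manin derivative with the $t$-derivative of $[\psi_t^\ast\Omega]$ in the flat trivialization, and the chart-by-chart compatibility of the orbifold calculus --- which is exactly the content the paper delegates to the references.
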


\begin{proof}
See \cite{LiuZhuang}; actually the proof goes as in the classical case, see
Proposition 9.2.2 in \cite{v1}.
\end{proof}

Again we take $U$ a contractible open set trivializing $\mathcal{X_U}_{\vert U}\simeq U\times X_0$.
\begin{dfn} The period map 
$$\mathcal{P}^{p,k}: \mathcal{U}\to  \operatorname{Grass}(b^{p,k},H^k(X,\C)) $$
is the map which to $u\in {U}$ associates the term 
$F^pH^k(X_u,\C)$ in the Hodge filtration of $ H^k(X_u,\C)\simeq H^k(X_0,\C)$.
\end{dfn}

Here $b^{p,k} = \dim F^pH^k(X_u,\C)$.
Note that $\mathcal{P}^{p,k}$ is a map of complex manifolds.

\begin{prop} The period map $\mathcal{P}^{p,k}$ is holomorphic.
\end{prop}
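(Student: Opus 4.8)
The plan is to prove holomorphicity of the period map $\mathcal P^{p,k}$ by the standard argument, carefully adapted to the orbifold setting. First I would recall that holomorphicity is a local statement, so it suffices to work on a contractible open $U\subset\mathcal U$ over which $\mathcal X_U\simeq U\times X_0$ is trivial as a $C^\infty$ orbifold fibration, so that $H^k(X_u,\C)\simeq H^k(X_0,\C)$ canonically and the Grassmannian $\operatorname{Grass}(b^{p,k},H^k(X_0,\C))$ is a fixed complex manifold. The period map is then $u\mapsto F^pH^k(X_u,\C)$, a moving subspace of a fixed vector space, and I must show it depends holomorphically on $u$.

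Next I would produce a local holomorphic frame for the subbundle $F^p\mathcal H^k\subset\mathcal H^k$. Using the description of the Hodge filtration on a quasi-smooth hypersurface in a simplicial toric variety via the Cox ring (as in \cite{bc}), one has explicit generators of $F^pH^k(X_u,\C)$ coming from residues of rational forms $\frac{A\,\Omega_\Sigma}{f^{\,m}}$ with $A$ a polynomial of the appropriate degree in the Cox ring; as $f$ varies holomorphically in $\mathcal U_\beta$ these residue classes vary holomorphically, giving Zariski forms $\Omega$ on the orbifold $\mathcal X$ whose restrictions $\Omega_u$ are closed and span $F^pH^k(X_u,\C)$. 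This realizes $\mathcal P^{p,k}$ locally as $u\mapsto \operatorname{span}\{\omega_1(u),\dots,\omega_{b^{p,k}}(u)\}$ with each $\omega_j$ a holomorphic section of $\mathcal H^k$; reading this into an affine chart of the Grassmannian (complete the $\omega_j(u_0)$ to a basis and express $\omega_j(u)$ in coordinates) shows the map is holomorphic provided the $\omega_j$ are holomorphic sections of $\mathcal H^k$, i.e.\ flat-holomorphic with respect to the holomorphic structure on $\mathcal H^k$ coming from $\nabla^{0,1}$.

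The substantive point — and the main obstacle — is exactly this last clause: one must check that the sections $\omega_j$ built from Cox-ring residues are holomorphic in the sense that $\nabla^{0,1}\omega_j=0$, equivalently that $\nabla_{\bar w}\omega_j=0$ for all antiholomorphic tangent vectors $\bar w$. Here the Cartan–Lie formula proved above is the key tool: $\nabla_{\bar w}\omega_j=[\iota_{\bar v}(d\Omega_j)|_{X_0}]$ for a lift $\bar v$ of $\bar w$, and since the family $\mathcal X\to\mathcal U$ is holomorphic one may choose $\bar v$ of type $(0,1)$ and $\Omega_j$ a form whose coefficients depend holomorphically on the parameters, so that $\iota_{\bar v}(d\Omega_j)$ has no component contributing to the $(p,k-p)\oplus\cdots$ part and its class vanishes — this is the toric-orbifold analogue of the computation on p.\ of \cite{v1} showing $\mathcal P^{p,k}$ is holomorphic. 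I would carry this out by: (i) fixing holomorphic coordinates on $\mathcal U$ and a holomorphic trivialization of the orbifold family; (ii) writing $\Omega_j$ explicitly as a residue with polynomial numerator holomorphic in $f$; (iii) applying Cartan–Lie with an antiholomorphic lift $\bar v$ to get $\nabla_{\bar w}\omega_j=0$; (iv) concluding via the Grassmannian chart argument. The only real care needed is that all of this is legitimate on the orbifold $\mathcal X$ rather than a smooth variety, but the local triviality from \cite{ve} and the orbifold de Rham / Hodge theory from \cite{bc} make the classical proof go through verbatim.
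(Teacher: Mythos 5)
Your overall strategy differs from the paper's: the paper first establishes that $\mathcal{P}^{p,k}$ is $C^{\infty}$ (via Theorem 7.9 of Kodaira together with orbifold Hodge theory, which gives smooth dependence of the harmonic projectors and constancy of the Hodge numbers) and then shows, as in Theorem 10.9 of Voisin I, that the $\C$-linear extension of $d\mathcal{P}^{p,k}$ vanishes on vectors of type $(0,1)$. You instead try to exhibit an explicit frame of $F^p\mathcal{H}^k$ by Cox-ring residues and to show each frame section is $\nabla^{0,1}$-flat. That route could in principle work, but as written it has a genuine gap at your step (iii).

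The gap is the claim that the Cartan--Lie formula with a $(0,1)$ lift $\bar v$ gives $\nabla_{\bar w}\omega_j=0$. Two things go wrong. First, the trivialization $\mathcal{X}|_U\simeq U\times X_0$ is only a $C^{\infty}$ trivialization (the fibres are not mutually biholomorphic), so you cannot choose product coordinates in which $\Omega_j$ has ``coefficients depending holomorphically on the parameters''; the only thing the holomorphy of the fibration buys you is that a lift $\bar v$ of type $(0,1)$ exists. Second, for $p<k$ a representative $\Omega_j$ of a class in $F^p$ is not of pure type, so $\bar\partial\Omega_j\neq 0$ and $\iota_{\bar v}(d\Omega_j)|_{X_0}$ is in general a nonzero form; the type count only shows that it retains at least $p$ holomorphic differentials, i.e.\ $\nabla_{\bar w}\omega_j\in F^pH^k(X_u)$, not $\nabla_{\bar w}\omega_j=0$. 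That weaker conclusion is exactly what the paper (following Voisin) uses to get $d\mathcal{P}^{p,k}(\bar w)=0$ in $\operatorname{Hom}(F^p,H^k/F^p)$ --- but to convert it into holomorphicity one must already know $\mathcal{P}^{p,k}$ is $C^{\infty}$, which is the step your proposal skips entirely. If you insist on the frame argument, the genuine holomorphy of the residue sections ($\nabla^{0,1}\omega_j=0$ on the nose) is true but requires a different input, namely the identification of $\mathcal{H}^k$ with the relative (algebraic) de Rham cohomology of the family and the compatibility of $\nabla$ with that coherent, locally free $\cO_{\mathcal U}$-module structure in the toric orbifold setting --- a nontrivial comparison you would have to prove, not something that follows from Cartan--Lie as stated. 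Either supply that comparison, or revert to the two-step argument ($C^{\infty}$ first, then vanishing of the $(0,1)$ part of the differential).
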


\begin{proof} For the reader's convenience we sketch here a proof of this result, although it has been actually   already   proved in \cite{LiuZhuang}.
By Theorem 7.9 in \cite{k} and the fact that Hodge theorem holds also in the orbifold case (\cite{Saito,Zaf09} and also   section 2.1 in 
 \cite{l})
$\mathcal{P}^{p,k}$ is a $C^{\infty}$ map. The rest of the proof follows as in Theorem 10.9 in \cite{v1}, whose strategy is to prove that  the $\C$-linear extension of the  differential to $T_u{U}\otimes \C$ of $\mathcal{P}^{p,k}$ vanishes on the vectors of type (0,1).\end{proof}

\begin{rmk} There is an intrinsic relation between  the differential $$d\mathcal{P}^{p,k}_u(w)\colon F^pH^k(X_u)\to  H^k(X_0)/F^pH^k(X_u)$$ and the covariant derivative $\nabla_w\colon \mathcal{H}^k\to  \mathcal{H}^k$, namely, given  $\sigma\in F^pH^k(X_u) $ one can construct a local section of $\mathcal H^k$ over $U$
$$
\begin{array}{ccc}
\tilde{\sigma} \colon U&\to &   H^k(X_u) \\
u'&\mapsto& \tilde{\sigma}(u')\in F^pH(X_{u'}) 
\end{array}
$$
such that $\tilde{\sigma}(u)=\sigma $. Hence, 
$$d\mathcal{P}^{p,k}_u(w)(\sigma)=\nabla_w \tilde{\sigma} \ \text{mod}\  F^pH^k(X_u)$$

\end{rmk}

\begin{prop}[Griffiths Transversality]   
$$\nabla F^p \mathcal{H}^k\subset F^{p-1}\mathcal{H}^k $$
\end{prop}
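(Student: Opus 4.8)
The plan is to deduce Griffiths transversality for the Gauss--Manin connection on the orbifold family $\mathcal{X}\to\mathcal{U}$ from the Cartan--Lie formula proved above, following the classical argument (Voisin, \emph{Hodge Theory and Complex Algebraic Geometry}, Thm.~10.4 and its vicinity) with the orbifold modifications already in place in the excerpt. First I would work locally over a contractible $U$ trivializing $\mathcal{X}_{\vert U}\simeq U\times X_0$, fix $u\in U$ and a tangent vector $w\in T_{\mathcal{U},u}$, and represent a section of $F^p\mathcal{H}^k$ near $u$ by a Zariski $k$-form $\Omega$ on $\mathcal{X}$ whose restriction $\Omega_{u'}$ to each fibre $X_{u'}$ is closed and defines a class in $F^pH^k(X_{u'})$; such a representative exists because the Hodge filtration on the cohomology of a quasi-smooth hypersurface is given by forms with at most $k-p$ ``poles'' in the Griffiths sense, and this can be spread out holomorphically over the base (here one invokes that Hodge theory holds in the orbifold category, via \cite{Saito,Zaf09} and \cite{bc}, exactly as used in the preceding proposition).

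Next I would apply the Cartan--Lie formula: choosing $v\in\Gamma(T_{\mathcal{X}\vert X_0})$ lifting $w$, one has $\nabla_w\omega = [\iota_v(d\Omega)_{\vert X_0}]$. The key computation is then purely linear-algebraic on forms: since $\Omega_{\vert X_{u'}}$ is a $k$-form of Hodge type in $F^p$, in adapted local (orbifold) coordinates $\Omega$ is, modulo the ideal of exact forms and terms vanishing on fibres, a combination of $dz_{i_1}\wedge\cdots\wedge dz_{i_a}\wedge(\text{antiholomorphic }(k-a)\text{-form})$ with $a\ge p$; the full de~Rham differential $d\Omega$ picks up at most one extra $d\bar z$, and contracting with $v$ — which, after adjusting the lift by a vertical vector field, may be taken of type $(1,0)$ along $X_0$ — removes at most one holomorphic $dz$. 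Hence $\iota_v(d\Omega)_{\vert X_0}$ is a closed $k$-form on $X_0$ with holomorphic degree at least $p-1$, so its class lies in $F^{p-1}H^k(X_0)$. This gives $\nabla_w\colon F^p\mathcal{H}^k_u\to F^{p-1}\mathcal{H}^k_u$, and since $u$ and $w$ are arbitrary, $\nabla F^p\mathcal{H}^k\subset F^{p-1}\mathcal{H}^k\otimes\Omega_{\mathcal{U}}$.

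The main obstacle I anticipate is the first step — producing, over the base, a holomorphic family of representatives $\Omega_{u'}$ lying in $F^pH^k(X_{u'})$ and checking that the type/pole-order bookkeeping in the Cartan--Lie formula is unaffected by the orbifold singularities inherited from $\Pj_\Sigma^{2k+1}$. This is where one must be careful that the local analytic charts are orbifold charts and that $d$, $\iota_v$, and the Hodge decomposition all descend compatibly; the relevant inputs are the orbifold Hodge theorem cited above and the description of $H^\ast(X_f)$ via the Cox/Jacobian ring from \cite{bc}, which guarantee that the classical filtration-by-pole-order argument transfers verbatim. Away from that point, the argument is the standard one and requires no further new ingredients beyond the Cartan--Lie formula already established.
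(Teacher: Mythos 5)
Your argument is correct and is essentially the paper's: both rest on the Cartan--Lie formula applied to a fibrewise-$F^p$ representative $\Omega$, followed by a count of holomorphic degree after applying $d$ and contracting with a $(1,0)$ lift $v$. The only difference is presentational --- the paper delegates that type count to the holomorphicity of the period map established in the preceding proposition (first treating $(0,1)$ lifts, then deducing the $(1,0)$ case), whereas you carry out the count explicitly; the underlying computation, and the reliance on orbifold Hodge theory to produce the family of representatives, are the same.
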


\begin{proof}
By the Cartan-Lie formula and the above remark 
$$d\mathcal{P}^{p,k}_w(\sigma)=\left[\iota_v d\Omega_{|X_0} \right]\ \text{mod}\  F^pH^k(X_u).$$ The fact that $\mathcal{P}^{p,k}$ is holomorphic implies that that $\iota_vd\Omega_{|X_0}\in F^pH^k(X_u)$ if $v$ is of type $(0,1)$, so that  if $v$ is of type $(1,0)$ we get  $\iota_vd\Omega_{|X_0}\in F^{p-1}H^k(X_u)$.
\end{proof}

\begin{thm} Each $NL_{\lambda,U}^{k,\beta}\subset \mathcal{U}$ can be defined locally by   $h^{k-1,k+1}$ holomorphic equations, where $h^{k-1,k+1}=\operatorname{rk}  F^{k-1}\mathcal{H}^{2k}/F^k\mathcal{H}^{2k}.$

\end{thm}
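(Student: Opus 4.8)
\emph{Plan.} The plan is to present $NL_{\lambda,U}^{k,\beta}$, near an arbitrary point $f$ of it, as the common zero locus of the pairings of the flat class $\lambda$ against a holomorphic frame of $F^{k+1}\mathcal{H}^{2k}$, and then to use Griffiths transversality (just established) to discard all but $h^{k-1,k+1}$ of these functions. Shrink $U$ so that the cup‑product pairing $\langle\ ,\ \rangle$ on $\mathcal{H}^{2k}$ (which is $\nabla$‑horizontal, being topological) and the holomorphic sub-bundles $F^{k+1}\mathcal{H}^{2k}\supset F^{k+2}\mathcal{H}^{2k}$ are defined over it. By the Hodge--Riemann bilinear relations (valid since quasi‑smooth hypersurfaces carry pure polarized Hodge structures, see \cite{bc}) one has $F^kH^{2k}(X_u)=\bigl(F^{k+1}H^{2k}(X_u)\bigr)^{\perp}$: the two subspaces have complementary dimension and $H^{p,2k-p}\cdot H^{p',2k-p'}=0$ unless $p+p'=2k$. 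Hence for $G\in U$, $\bar\lambda_G=0$, i.e.\ $G\in NL_{\lambda,U}^{k,\beta}$, if and only if $\langle\lambda_G,\sigma\rangle=0$ for every local section $\sigma$ of $F^{k+1}\mathcal{H}^{2k}$. Choose a holomorphic frame $\sigma_1,\dots,\sigma_b$ of $F^{k+1}\mathcal{H}^{2k}$ adapted to $F^{k+2}\mathcal{H}^{2k}$, so that $\sigma_{m+1},\dots,\sigma_b$ is a frame of $F^{k+2}\mathcal{H}^{2k}$ and $m=\operatorname{rk}\bigl(F^{k+1}\mathcal{H}^{2k}/F^{k+2}\mathcal{H}^{2k}\bigr)=h^{k+1,k-1}=h^{k-1,k+1}$ by Hodge symmetry. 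Put $g_i:=\langle\lambda,\sigma_i\rangle\in\mathcal O_U$; then $NL_{\lambda,U}^{k,\beta}=\{g_1=\dots=g_b=0\}$ near $f$, and the claim is that $\{g_1=\dots=g_m=0\}$ already cuts it out.

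\emph{The differential relations.} Since $\lambda$ is flat, $\nabla\lambda=0$, so $dg_i=\langle\nabla\lambda,\sigma_i\rangle+\langle\lambda,\nabla\sigma_i\rangle=\langle\lambda,\nabla\sigma_i\rangle$. For $i>m$ we have $\sigma_i\in F^{k+2}\mathcal{H}^{2k}$, hence Griffiths transversality gives $\nabla\sigma_i\in F^{k+1}\mathcal{H}^{2k}\otimes\Omega_U$; writing $\nabla\sigma_i=\sum_{j=1}^{b}\sigma_j\otimes\omega_{ij}$ with holomorphic $1$‑forms $\omega_{ij}$ (recall $\sigma_i$ is holomorphic, so $\nabla\sigma_i=\nabla^{1,0}\sigma_i$) we obtain
\[
dg_i=\sum_{j=1}^{b} g_j\,\omega_{ij}\qquad (i>m).
\]
Moreover $g_i(f)=\langle\lambda_f,\sigma_i(f)\rangle=0$ for \emph{every} $i$, because $\lambda_f\in H^{k,k}(X_f)$ and $\sigma_i(f)\in F^{k+1}H^{2k}(X_f)$ are orthogonal.

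\emph{Killing the surplus equations.} Let $Y$ be any irreducible component, through $f$, of the analytic germ of $\{g_1=\dots=g_m=0\}$. Restricting the relations above to $Y$ and using $g_1|_Y=\dots=g_m|_Y=0$ gives $d\bar g_i=\sum_{j>m}\bar g_j\,\bar\omega_{ij}$ for $i>m$, where $\bar g_i=g_i|_Y$, $\bar\omega_{ij}=\omega_{ij}|_Y$. Thus the ideal $\mathcal J\subset\mathcal O_{Y,f}$ generated by $\bar g_{m+1},\dots,\bar g_b$ satisfies $d\mathcal J\subset\mathcal J\cdot\Omega^1_{Y,f}$, and $\mathcal J\subset\mathfrak m_{Y,f}$ (all $\bar g_i$ vanish at $f$). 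This forces $\mathcal J=0$: passing to the completion, on the tangent cone $\operatorname{gr}\widehat{\mathcal O}_{Y,f}$ the universal derivation is injective on each graded piece (Euler's identity, characteristic $0$), so an order‑by‑order induction gives $\mathcal J\subset\bigcap_n\mathfrak m_{Y,f}^{\,n}=0$. Hence $\bar g_i\equiv 0$ on $Y$ for all $i$, i.e.\ $Y\subset NL_{\lambda,U}^{k,\beta}$. Since this holds for every component through $f$, while trivially $NL_{\lambda,U}^{k,\beta}\subset\{g_1=\dots=g_m=0\}$, the germ of $NL_{\lambda,U}^{k,\beta}$ at $f$ is cut out by the $m=h^{k-1,k+1}$ holomorphic equations $g_1=\dots=g_m=0$, as claimed.

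\emph{Main difficulty, and a remark.} The only delicate point is the last step when $Y$ is singular (or the scheme $\{g_1=\dots=g_m=0\}$ is non‑reduced): the implication ``$dg$ has all coefficients in the ideal $\mathcal J$'' $\Rightarrow$ ``$\mathcal J=0$''. On the smooth locus of $Y$ this is the uniqueness of the zero solution of the linear Pfaffian system $d\bar g=\bar\omega\,\bar g$ with $\bar g(f)=0$; in general one runs the completed‑ring/Euler argument above, using the identification of the Zariski cotangent space of $Y$ at $f$ with $\Omega^1_{Y,f}\otimes\C$. (Alternatively one transcribes the corresponding step of \cite{bg1}.) It is also convenient to note at the outset that the cup‑product pairing is non‑degenerate on the variable part of $\mathcal{H}^{2k}$, where $\lambda$ lives — its orthogonal complement being the flat sub‑local system $i^{\ast}H^{2k}(\Pj_\Sigma^{2k+1})$ — so one may work there throughout; everything else is formal once Griffiths transversality and the Hodge--Riemann relations are available.
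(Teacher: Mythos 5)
Your argument is correct in substance and is, at bottom, the classical argument of Voisin that the paper's proof simply cites (``the proof goes as in the classical case, see Lemma 3.1 in \cite{v} and section 5.3 in \cite{v2}''): exhibit $NL^{k,\beta}_{\lambda,U}$ as the zero locus of a full set of holomorphic functions, observe that by Griffiths transversality and flatness of $\lambda$ the differentials of all but $h^{k-1,k+1}$ of them lie in the ideal generated by the whole collection, and conclude that the surplus equations are redundant. The one genuine difference is in how the equations are produced. Voisin works directly with the components of $\bar\lambda$ in the quotient bundle $\mathcal{H}^{2k}/F^k\mathcal{H}^{2k}$, singling out the $h^{k-1,k+1}$ components along the sub-bundle $F^{k-1}\mathcal{H}^{2k}/F^k\mathcal{H}^{2k}$; this needs nothing beyond the holomorphicity of the Hodge filtration and transversality. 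You instead dualize via the cup-product pairing, writing the equations as $\langle\lambda,\sigma_i\rangle$ for a frame of $F^{k+1}\mathcal{H}^{2k}$ adapted to $F^{k+2}\mathcal{H}^{2k}$, which costs you an extra input: the Hodge--Riemann orthogonality $F^k=(F^{k+1})^{\perp}$ and the nondegeneracy of the pairing on the variable part, both of which do hold here because quasi-smooth hypersurfaces are orbifolds with rational Poincar\'e duality and pure polarized Hodge structure \cite{bc}, but which you should state as an explicit appeal to that orbifold Hodge theory (as the paper does elsewhere in Section 3). The two routes are equivalent and yield the same count $h^{k+1,k-1}=h^{k-1,k+1}$.

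One step deserves a firmer footing: the elimination of the surplus equations when the germ $Y$ is singular. Your Euler/tangent-cone induction is stated for $\Omega^1_{Y,f}$ of a possibly singular $Y$, where the interaction of the universal derivation with the $\mathfrak m$-adic filtration is delicate (torsion in $\Omega^1_{Y,f}$, relations coming from the equations of $Y$). The cleanest repair is to avoid $\Omega^1_{Y,f}$ altogether: pull the relations $dg_i=\sum_j g_j\,\omega_{ij}$ back along any holomorphic arc $\gamma\colon(\Delta,0)\to(Y,f)$ to get a linear ODE system for the $g_i\circ\gamma$ with zero initial condition, hence $g_i\circ\gamma\equiv 0$; since $Y$ is a reduced irreducible germ, such arcs sweep out $Y$ and the $g_i$ vanish on $Y$. (Alternatively, run your formal induction in the ambient smooth ring $\mathcal O_{U,f}$ modulo the ideal $(g_1,\dots,g_m)$, or simply quote the corresponding lemma of \cite{v}.) With that adjustment the proof is complete.
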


\begin{proof}  Once  Griffiths Transversality has been generalized,    the proof goes as in classical case,
see  Lemma 3.1  in \cite{v} and section 5.3  in \cite{v2}.
\end{proof}

This proves Theorem \ref{uppercodim}. 
%
%
%
%
%

\frenchspacing

\end{document}